\def\a{\alpha}               
\def\d{\delta}       \def\la{\lambda}     \def\om{\omega}
       \def\t{\theta}       
               \def\t{\theta}
       \def\vf{\varphi}
\def\G{\Gamma}
\def\D{{\mathbb D}}     
\def\C{{\mathbb C}}     
\def\R{{\mathbb R}}
\def\({\left(}       \def\){\right)}
\newtheorem{prop}{\sc Proposition}
\newtheorem{lem}[prop]{\sc Lemma}
\newtheorem{thm}[prop]{\sc Theorem}
\newtheorem{cor}[prop]{\sc Corollary}
\begin{document}
\title{Extremal problems for vanishing functions in Bergman spaces}
\author[A. Llinares]{Adri\'an Llinares}
\address{Departamento de Matem\'aticas, Universidad Aut\'onoma de
Madrid, 28049 Madrid, Spain}
\email{adrian.llinares@uam.es}
\author[D. Vukoti\'c]{Dragan Vukoti\'c}
\address{Departamento de Matem\'aticas, Universidad Aut\'onoma de
Madrid, 28049 Madrid, Spain} \email{dragan.vukotic@uam.es}
\thanks{The authors are partially supported by PID2019-106870GB-I00 from MICINN, Spain. The first author is supported by MICINN Fellowship, reference number FPU17/00040.}
\subjclass[2010]{30H05}
\date{25 July, 2021.}
\begin{abstract}
We prove two sharp estimates for the subspace of a standard weighted Bergman space that consists of functions vanishing at a given point (with prescribed multiplicity).
\end{abstract}
\maketitle
\section{Introduction}
 \label{sec-intro}
\subsection{Statements of main results}
 \label{subsec-statements}
Let $\D$ denote the unit disk in the complex plane and $A^p_\a$ the Bergman space consisting of all analytic functions in $\D$ that are $p$-integrable with respect to the standard weight $(\a+1)(1-|z|^2)^\a$ and the normalized Lebesgue area measure $dA$ on $\D$, with $\a>-1$, equipped with the usual norm (when $1\le p<\infty$) or translation-invariant metric (when $0<p<1$).
\par
In this note we solve two linear extremal problems in $A^p_\a$, with restriction to the functions that vanish at a given point $a\in\D$. In the case of a simple zero at $a$, with $\vf_a(z) = (a-z)/(1-\overline{a}z)$, the sharp inequalities obtained can be written as follows:
\begin{equation}
 \max \left\{ \left\|\frac{f}{\vf_a}\right\|_{p,\a} \,:\,\|f\|_{p,\a}=1, f(a)=0 \right\} = \( \frac{\G \(\frac{p}{2}+2+\a\)}{\G(2+\a) \G \(\frac{p}{2}+1\)} \)^{1/p} \,.
 \label{eq-est-1}
\end{equation}
\begin{equation}
 \max \{|f^\prime(a)|\,:\,\|f\|_{p,\a}=1, f(a)=0\} = \( \frac{\G \(\frac{p}{2}+2+\a\)}{\G(2+\a) \G \(\frac{p}{2}+1\)} \)^{1/p} (1-|a|^2)^{-(1+(2+\a)/p)}\,,
 \label{eq-est-2}
\end{equation}
where $\G$ denotes the standard Euler Gamma function. We will actually  state and prove them in greater generality, for zeros of higher multiplicity.
\par
To the best of our knowledge, the sharp estimate \eqref{eq-est-1} is new while \eqref{eq-est-2} is new in the case $0<p<1$ but its proof is different from, and somewhat simpler than, the one given before.
\subsection{Motivation and some history}
 \label{subsec-motiv}
Similar or related problems have been studied extensively. For detailed treatments of extremal problems on the closely related classical Hardy spaces, we refer the reader to \cite[Chapter~8]{D}, \cite[Chapter~IV]{G} or \cite{Kh}. To list only a few references, extremal problems in Bergman spaces have been studied in \cite{F1}, \cite{F2}, \cite{F3}, \cite{KS}, \cite{OS}, \cite{R}, \cite{V1}, \cite{V2} for linear functionals and in \cite{V3} for norm-attaining operators. The related history of contractive zero-divisors in Bergman spaces is summarized in the monographs \cite{DS} and \cite{HKZ}. A characterization of isometric zero-divisors for general function spaces and a formal proof of their non-existence in Bergman spaces is given in \cite[Section~5]{ADMV}.
\par
Regarding our specific problems, we recall that the bounded operator of multiplication by $z$ (\textit{shift\/}) on $A^p$ is also bounded from below. That is, there exists a positive constant $C$ such that $\,\int_\D |g(z)|^p dA \le C \int_\D |z g(z)|^p dA\,$ for all $g\in A^p$; \textit{cf.\/} \cite[pp.~60-61]{DS}. Since it is easy to see that a function $f$ vanishing at the origin belongs to $A^p$ if and only if $f(z)=z g(z)$ for some $g\in A^p$, the above inequality can be restated in an  equivalent way as follows: if $f\in A^p$ and $f(0)=0$ then the function $f(z)/z$ is also in $A^p$ and $\|f(z)/z\|_p\le c \|f\|_p$ for some fixed $c>0$ and all such $f$. This is easily generalized to a zero of order at least $N$ at the origin and dividing out the monomial $z^N$ and to a Blaschke factor $\vf_a^N$ corresponding to a zero of order at least $N$ at $a\in\D$.
\par
From the point of view of extremal problems, it is a question of interest to determine the best possible constant $C$ in the norm inequality $\|f/\vf_a\|_{p,\a} \le C \|f\|_{p,\a}$ for the usual  weighted Bergman space $A^p_\a$. However, we have not been able to find it in the literature, even in the simplest case of a single Blaschke factor $\vf_a$, not even in the unweighted case $\a=0$ and for $a=0$.
In order to answer this question, the first result of this note gives a basic sharp estimate for general weighted Bergman spaces with positive radial weights and for dividing out the zero of order $N$ at the origin. By using a conformal change of variable, we then extend the estimate its form \eqref{eq-est-1} for the spaces with standard weights. The second main result solves (in greater generality than before) the closely related extremal problem \eqref{eq-est-2} for the point evaluation of the derivative of a function in the Bergman space that vanishes with prescribed multiplicity at the point in question.

\section{Preliminaries}
 \label{sect-prelim}
\subsection{Bergman spaces with radial weights}
 \label{subsec-wBergman-rad}
\par
In this section we review some basic facts needed in the sequel. Throughout the paper $\D$ will denote the unit disk in the complex plane: $\D=\{z=x+iy=re^{i\t}\,\colon\,|z|=r<1\}$ and $dA(z)=\pi^{-1}\,dx\,dy=\pi^{-1}\,r\,dr\,d\t$ the normalized Lebesgue area measure on $\D$. Let
$$
 M_p(r;f) = \( \frac{1}{2\pi} \int_0^{2\pi} |f(re^{i\t})|^p \,d\t \)^{1/p}
$$
denote the usual $p$-integral means over the circle of radius $r$ centered at the origin.
\par
We will say that $\om$ is a radial weight in the unit disk $\D$ if  $\om(z)=\om(|z|)$ for all $z\in\D$. Assuming that, in addition, such a weight is strictly positive and integrable, the \textit{weighted Bergman space\/} $A^p_\om$ is defined as the set of all funcions $f$ analytic in $\D$ such that
\begin{equation}
 \|f\|_{p,\,\om} = \( \int_\D |f (z)|^p \om(z) \,dA(z) \)^{1/p} = \( \int_0^1 2r \om (r) M_p^p(r;f)\,dr \)^{1/p} < \infty\,.
 \label{eq-polar}
\end{equation}
This expression defines a norm on $A^p_\om$ when $1\le p<\infty$. When $0<p<1$, $\|f\|_{p,\om}$ is still used although the space is no longer a normed space but $d_p(f,g)=\|f-g\|_{p,\om}^p$ defines a translation-invariant metric. The subharmonicity of $|f|^p$ and the mean-value property, together with \eqref{eq-polar} and a normal families argument,  easily show that point evaluations are bounded on $A^p_\om$ and this in turn implies that the space is actually complete.
\par
\subsection{Standard weighted Bergman spaces}
 \label{subsec-chng-sharp-est}
\par
Bergman spaces, including the standard weighted spaces, have been studied extensively; see monographs \cite{DS} and \cite{HKZ}. For  $\a>-1$, the \textit{standard radial weights\/} are given by $\om (z)=(\a+1) (1-|z|^2)^\a$. The corresponding weighted Bergman spaces are denoted by $A^p_\a$.
\par
The following estimate is well known; see \cite[Section~2.4]{DS} or \cite{V1}:
\begin{equation}
 |f(z)| \le (1-|z|^2)^{-(2+\a)/p} \|f\|_{p,\,\a}\,, \qquad z\in\D\,, \ f\in A^p_\a\,.
 \label{eq-Ap-est}
\end{equation}
Equality holds at $z=a$ only for the constant multiples of the functions
$$
 f_a(z) = \( \frac{1-|a|^2}{(1-\overline{a}z)^2} \)^{(2+\a)/p}\,.
$$
Both the estimate and some ingredients from its proof will be needed throughout the paper. Some details are in order. For $a\in\D$, we will use the customary notation $\vf_a$ for the following disk automorphisms $$
 \vf_a(z)=\frac{a-z}{1-\overline{a}z}\,, \quad z\in\D\,.
$$
These maps are involutions: $\vf_a(\vf_a(z))=z$ for all $z\in\D$. It is readily checked that they induce surjective linear isometries of the space $A^p_\a$ given by
\begin{equation}
 I_a f(z)= (\vf_a^\prime (z))^{(2+\a)/p} f(\vf_a(z))\,, \quad z\in\D\,.
 \label{eq-Ap-isom}
\end{equation}
These isometries show that it is enough to prove \eqref{eq-Ap-est} at the origin. When $1\le p<\infty$, this is a direct consequence of H\"older's inequality while when $0<p<1$ the use of the arithmetic-geometric mean inequality is required.

\section{Main results and their proofs}
 \label{sect-res-pfs}
\subsection{Chebyshev's inequality}
 \label{subsec-chebyshev}
\par
In what follows we will need the  classical Chebyshev inequality for monotone functions. We include a proof for the sake of completeness.
\par
\begin{lem} \label{lem-cheb}
Let $\mu$ be a positive finite measure on an interval $I\subset\R$ and $f$, $g$ non-decreasing functions on $I$ such that $f$, $g$, and $fg$ are integrable on $I$ with respect to $\mu$. Then
$$
 \int_I f g\,d\mu \ge \frac{1}{\mu(I)} \int_I f\,d\mu \int_I g\,d\mu\,. $$
Equality holds if and only if either $f$ or $g$ is constant $\mu$-almost everywhere.
\end{lem}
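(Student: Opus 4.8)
The plan is to use the classical symmetrization trick. The key observation is that since both $f$ and $g$ are non-decreasing, the quantity $(f(x)-f(y))(g(x)-g(y))$ is always non-negative for all $x,y\in I$: when $x\ge y$ both factors are $\ge 0$, and when $x\le y$ both factors are $\le 0$, so the product is non-negative in either case. I would integrate this non-negative expression against the product measure $d\mu(x)\,d\mu(y)$ over $I\times I$, which immediately yields a non-negative double integral.

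Expanding the product gives four terms. By the symmetry of the product measure in $x$ and $y$, the two "diagonal" terms $\int\int f(x)g(x)$ and $\int\int f(y)g(y)$ are equal, each contributing $\mu(I)\int_I fg\,d\mu$ after integrating out the free variable. Likewise the two "cross" terms $\int\int f(x)g(y)$ and $\int\int f(y)g(x)$ are equal, each contributing $\int_I f\,d\mu\int_I g\,d\mu$ by Fubini's theorem (the integrability hypotheses on $f$, $g$, and $fg$ are exactly what is needed to justify splitting the double integrals). Thus
$$
0\le \int_I\int_I (f(x)-f(y))(g(x)-g(y))\,d\mu(x)\,d\mu(y) = 2\mu(I)\int_I fg\,d\mu - 2\int_I f\,d\mu\int_I g\,d\mu\,,
$$
and dividing by $2\mu(I)$ gives the stated inequality.

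For the equality case, I would argue that equality holds in the final inequality precisely when the non-negative integrand $(f(x)-f(y))(g(x)-g(y))$ vanishes for $(\mu\times\mu)$-almost every $(x,y)$. The main subtlety here is translating this vanishing condition back into a statement about $f$ and $g$ individually. If neither $f$ nor $g$ is constant $\mu$-almost everywhere, then there exist sets of positive $\mu$-measure on which each function takes genuinely different values, and by monotonicity one can locate a pair of points (up to null sets) where both factors are strictly nonzero of the same sign, forcing the integrand to be strictly positive on a set of positive product measure—a contradiction. Conversely, if either function is $\mu$-almost everywhere constant, the integrand vanishes $(\mu\times\mu)$-almost everywhere and equality clearly holds. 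I expect this equality analysis to be the only delicate point; the inequality itself is a routine symmetrization once the sign of the integrand is noted.
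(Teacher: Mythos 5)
Your proposal is correct and follows essentially the same argument as the paper: integrating the non-negative symmetrized product $(f(x)-f(y))(g(x)-g(y))$ over $I\times I$ against $d\mu(x)\,d\mu(y)$ and expanding via Fubini's theorem. Your treatment of the equality case is in fact slightly more detailed than the paper's, which simply declares it clear.
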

\begin{proof}
Since $(f(x)-f(y)) (g(x)-g(y))\ge 0$ for all $(x,y)\in I\times I$,  Fubini's theorem yields
$$
 0 \le \int_{I\times I} (f(x)-f(y)) (g(x)-g(y))\,d\mu(x)\,d\mu(y) =
 2 \mu (I) \int_{I} f(x) g(x)\,d\mu(x) - 2 \int_I f\,d\mu \int_I g\,d\mu\,.
$$
The case of equality is clear.
\end{proof}
\subsection{A sharp estimate for dividing out a single Blaschke factor}
 \label{subsec-sharp-est-div-zero}
The following result will be the key to our first main theorem. Note that, just like the remaining statements in this note, it is true regardless of the value of $p$; that is, it holds both when $A^p_\om$ is a Banach space and when it is a complete metric space. In other words, $\|f\|_{p,\,\om}$ is only used as a notation but no homogeneity property of the norm is needed in the proof.
\par
\begin{prop} \label{prop-rad-norm-est}
Let $\om$ be a positive radial weight defined in the unit disk $\D$ and let $0<p<\infty$. The following sharp norm inequality
\begin{equation}
  \left\|\frac{f(z)}{z^N}\right\|_{p,\,\om} \le \( \frac{\int_0^1 r \om (r)\,dr}{\int_0^1 r^{Np+1} \om (r)\,dr}\)^{1/p} \|f\|_{p,\,\om}
 \label{eq-wnorm-est}
\end{equation}
holds for all $f\in A^p_\om$ with a zero at the origin of order at least $N$, with equality only for the constant multiples of $f(z)=z^N$.
\end{prop}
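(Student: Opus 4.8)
The plan is to reduce the problem to Chebyshev's inequality (Lemma~\ref{lem-cheb}) applied to the $p$-integral means of the quotient $f/z^N$. Since $f$ has a zero of order at least $N$ at the origin, the function $g(z)=f(z)/z^N$ is analytic in $\D$, and on each circle of radius $r$ we have $|f(re^{i\t})|=r^N|g(re^{i\t})|$. Raising to the power $p$ and integrating in $\t$ gives the identity $M_p^p(r;f)=r^{Np}M_p^p(r;g)$ for every $r\in(0,1)$. Substituting this into the polar form \eqref{eq-polar} of the norm, the estimate \eqref{eq-wnorm-est} becomes equivalent to
$$
 \frac{\int_0^1 r\om(r)M_p^p(r;g)\,dr}{\int_0^1 r\om(r)\,dr}\le\frac{\int_0^1 r^{Np+1}\om(r)M_p^p(r;g)\,dr}{\int_0^1 r^{Np+1}\om(r)\,dr}\,,
$$
so the whole matter reduces to comparing two weighted averages of the single function $h(r):=M_p^p(r;g)$.

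The key structural fact is that $h$ is non-decreasing: for a function analytic in $\D$, Hardy's convexity theorem guarantees that its $p$-integral means increase with the radius. Thus both $h(r)$ and $r^{Np}$ are non-decreasing on $(0,1)$, and I would apply Lemma~\ref{lem-cheb} to these two functions with respect to the finite positive measure $d\mu(r)=r\om(r)\,dr$ on $I=(0,1)$, finite because $\om$ is integrable. The lemma yields
$$
 \int_0^1 r^{Np+1}\om(r)h(r)\,dr\ge\frac{1}{\int_0^1 r\om(r)\,dr}\int_0^1 r\om(r)h(r)\,dr\int_0^1 r^{Np+1}\om(r)\,dr\,,
$$
which is exactly the displayed comparison of averages, hence \eqref{eq-wnorm-est}. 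Finiteness of $\|f/z^N\|_{p,\om}$, i.e.\ that $g\in A^p_\om$, then follows \emph{a posteriori}, since the right-hand side of \eqref{eq-wnorm-est} is finite.

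For the equality statement I would invoke the sharpness clause of Lemma~\ref{lem-cheb}: because $\om>0$, the measure $\mu$ has full support on $(0,1)$, so equality forces either $r^{Np}$ or $h(r)$ to be constant $\mu$-almost everywhere. As $N\ge1$, the factor $r^{Np}$ is not constant, so $h(r)=M_p^p(r;g)$ must be constant a.e.\ and hence, by continuity of the integral means, constant on all of $(0,1)$. A non-constant analytic function has strictly increasing integral means, so $g$ is forced to be constant; equivalently $f(z)=cz^N$. Conversely, for $f(z)=z^N$ one has $h$ constant, and the Chebyshev inequality is an identity, so equality indeed holds.

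I expect the only genuine subtlety to lie in the monotonicity input and its sharp form: one must record not merely that $M_p^p(r;g)$ is non-decreasing, but that it is strictly increasing unless $g$ reduces to a constant, since this is precisely what pins down the extremal functions in the equality case. Everything else is a direct substitution into \eqref{eq-polar} followed by a single application of Lemma~\ref{lem-cheb}.
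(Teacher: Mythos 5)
Your proposal follows the paper's proof almost step for step: the same factorization $f(z)=z^N g(z)$, the same identity $M_p^p(r;f)=r^{Np}M_p^p(r;g)$, the same application of Lemma~\ref{lem-cheb} to the non-decreasing pair $r^{Np}$ and $h(r)=M_p^p(r;g)$ with respect to $d\mu=2r\om(r)\,dr$, and essentially the same equality analysis. However, there is one genuine flaw in how you dispatch the integrability issue. Lemma~\ref{lem-cheb} hypothesizes that $f$, $g$, \emph{and} $fg$ are all $\mu$-integrable, and you never verify that $h$ is integrable; instead you assert that $g\in A^p_\om$ ``follows a posteriori'' from \eqref{eq-wnorm-est}. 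That is circular: \eqref{eq-wnorm-est} is the conclusion of the very application of the lemma whose hypotheses you are trying to justify. The hypothesis is not decorative, either: if $\int_0^1 h\,d\mu=\infty$, the Fubini expansion in the proof of Lemma~\ref{lem-cheb} would pit an infinite cross term $\int_I r^{Np}\,d\mu\cdot\int_I h\,d\mu$ against the finite term $\mu(I)\int_I r^{Np}h\,d\mu=\mu(I)\,\|f\|_{p,\,\om}^p$, so the four-term splitting is not legitimate and the lemma as stated does not apply.

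The gap is small and the paper closes it with exactly the a priori check you omitted: fix $0<\d<1$; on $[0,\d]$ the function $h$ is continuous, so $\int_0^\d h\,d\mu<\infty$; on $[\d,1]$ one has $h(r)\le \d^{-Np}\,r^{Np}h(r)$, whence
\begin{equation*}
 \int_\d^1 M_p^p(r;g)\,2r\om(r)\,dr \le \d^{-Np}\int_\d^1 r^{Np}M_p^p(r;g)\,2r\om(r)\,dr \le \d^{-Np}\,\|f\|_{p,\,\om}^p<\infty\,.
\end{equation*}
(Alternatively you could apply the lemma on $[0,\rho]$ with $\rho<1$, where all integrals are finite, and let $\rho\to1^-$ by monotone convergence; but some such step must appear.) With this two-line verification inserted, your argument coincides with the paper's. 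Your treatment of the equality case is correct and matches the paper's: since $\om>0$, the measure $\mu$ has full support, $r^{Np}$ is not $\mu$-a.e.\ constant for $N\ge1$, so $M_p^p(r;g)$ must be constant in $r$, which for analytic $g$ forces $g$ to be constant and $f$ to be a constant multiple of $z^N$.
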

\begin{proof}
Let $f\in A^p_\om$ be an arbitrary functon with a zero at the origin of order at least $N$. Then $f(z)=z^N g(z)$, where $g$ is analytic in $\D$. In order to apply Lemma~\ref{lem-cheb} with $I=[0,1]$ and $d\mu=2r \om(r) \,dr$, we need to check first that $M_p^p(r;g)$ is integrable with respect to this measure. Choose $\d$ such that $0<\d<1$. Then clearly $\int_0^\d M_p^p(r;g) 2 r \om(r) dr$ is finite since $M_p^p(r;g)$ is continuous, while
$$
 \int_\d^1 M_p^p(r;g) 2 r \om(r) dr \le \d^{-Np} \int_\d^1 r^{Np} M_p^p(r;g) 2 r \om(r) dr\,.
$$
Now, by Lemma~\ref{lem-cheb} we have
\begin{eqnarray*}
 \|f\|_{p,\,\om}^p &=& \int_0^1 r^{Np} M_p^p(r;g) 2r \om(r) \,d r
 \ge \frac{\int_0^1 r^{Np} 2r \om(r) \,d r}{\int_0^12r \om(r) \,d r} \int_0^1 M_p^p(r;g) 2r \om(r) \,d r
\\
 &= & \frac{\int_0^1 r^{Np+1} \om (r)\,d r}{\int_0^1\om (r) r\,d r} \|g\|_{p,\,\om}^p\,,
\end{eqnarray*}
which proves that $g\in A^p_\om$ and also the desired inequality. Equality holds if and only if $M_p(r;g)$ is a constant function of $r$. By the basic theory of Hardy spaces (\textit{cf\/}. \cite{D} or \cite{G}), this happens if and only if $g$ is constant. Thus, the extremal functions $f$ have the form claimed.
\end{proof}
\par
From Proposition~\ref{prop-rad-norm-est} we can deduce the following result for functions that vanish with order at least $N$ at $a$. Note that the optimal bound obtained does not depend on the point $a$. This is a consequence of conformal invariance.
\par
\begin{thm} \label{thm-aut-div}
Let $0<p<\infty$, $\a>-1$, and let $\om_\a (z)=(\a+1) (1-|z|^2)^\a$ be the standard radial weight. Denote by $A^p_\a$ the corresponding weighted Bergman space and by $\|f\|_{p,\a}$ the norm in it, as is usual. If $f\in A^p_\a$, $a\in\D$, and $f(a)=f^\prime(a)=\ldots=f^{(N-1)}(a)=0$ (for a positive integer $N$),  then
$$
 \left\|\frac{f}{\vf_a^N}\right\|_{p,\,\a} \le \( \frac{\G \(\frac{Np}{2}+2+\a\)}{\G(2+\a) \G \(\frac{Np}{2}+1\)} \)^{1/p} \|f\|_{p,\,\a}
$$
and the inequality is sharp. In particular, for the unweighted Bergman space $A^p$ and $N=1$ (a simple zero at $a$) this becomes:
$$
 \left\|\frac{f}{\vf_a}\right\|_p \le \(\frac{p+2}{2}\)^{1/p} \|f\|_p\,.
$$
\end{thm}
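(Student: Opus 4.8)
The plan is to transfer the extremal problem to the origin by means of the conformal isometries $I_a$ introduced in \eqref{eq-Ap-isom} and then to invoke Proposition~\ref{prop-rad-norm-est} for the standard weight $\om_\a$. First I would put $F=I_af$, so that $\|F\|_{p,\a}=\|f\|_{p,\a}$ because $I_a$ is an isometry of $A^p_\a$. Since $\vf_a(0)=a$, the derivative $\vf_a'$ is nowhere zero on $\D$, and $f$ vanishes to order at least $N$ at $a$, the function $F(z)=(\vf_a'(z))^{(2+\a)/p}f(\vf_a(z))$ vanishes to order at least $N$ at the origin; hence $F$ is admissible in Proposition~\ref{prop-rad-norm-est}.

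The crux is the observation that, under $I_a$, dividing out the Blaschke power $\vf_a^N$ becomes dividing out $z^N$. Using that $\vf_a$ is an involution, $\vf_a(\vf_a(z))=z$, I would compute
$$
 I_a\!\(\frac{f}{\vf_a^N}\)(z)=(\vf_a'(z))^{(2+\a)/p}\,\frac{f(\vf_a(z))}{\vf_a(\vf_a(z))^N}=\frac{(\vf_a'(z))^{(2+\a)/p}f(\vf_a(z))}{z^N}=\frac{F(z)}{z^N}\,.
$$
Read as a change of variables in the defining integral, the isometry identity then gives $\|f/\vf_a^N\|_{p,\a}=\|F/z^N\|_{p,\a}$ as an equality in $[0,\infty]$, so that the finiteness furnished by Proposition~\ref{prop-rad-norm-est} also yields $f/\vf_a^N\in A^p_\a$ with no membership assumed in advance.

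Combining these facts with Proposition~\ref{prop-rad-norm-est} applied to $F$ gives
$$
 \left\|\frac{f}{\vf_a^N}\right\|_{p,\a}=\left\|\frac{F}{z^N}\right\|_{p,\a}\le\(\frac{\int_0^1 r\,\om_\a(r)\,dr}{\int_0^1 r^{Np+1}\om_\a(r)\,dr}\)^{1/p}\|F\|_{p,\a}=\(\frac{\int_0^1 r\,\om_\a(r)\,dr}{\int_0^1 r^{Np+1}\om_\a(r)\,dr}\)^{1/p}\|f\|_{p,\a}\,,
$$
and it only remains to evaluate the ratio of integrals. With the substitution $u=r^2$ both integrals turn into Beta integrals: the numerator equals $\tfrac12$ and the denominator equals $\tfrac12 B(\tfrac{Np}{2}+1,\a+1)$; expressing $B$ through $\G$ and simplifying via $(\a+1)\G(\a+1)=\G(\a+2)$ collapses the quotient to $\G(\tfrac{Np}{2}+2+\a)/\(\G(2+\a)\G(\tfrac{Np}{2}+1)\)$, which is exactly the claimed constant. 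The special case is then immediate by setting $\a=0$, $N=1$ and using $\G(2)=1$ and $\G(x+1)=x\G(x)$.

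For sharpness I would transport the extremals of Proposition~\ref{prop-rad-norm-est} back through $I_a$: since equality there forces $F(z)=cz^N$, equality in the theorem holds precisely for the constant multiples of $I_a(z^N)(z)=(\vf_a'(z))^{(2+\a)/p}\vf_a(z)^N$, which vanish to order $N$ at $a$. The only genuinely non-routine points are the conformal-invariance identity above and the accompanying check that $F$ vanishes to order at least $N$ at the origin; the evaluation of the constant is a standard Beta--Gamma computation.
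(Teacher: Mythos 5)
Your proposal is correct and follows essentially the same route as the paper's own proof: transfer to the origin via the isometries $I_a$, the involution identity $I_a\(f/\vf_a^N\)=I_a f/z^N$, an application of Proposition~\ref{prop-rad-norm-est}, and the Beta--Gamma evaluation of the constant, with the same extremal functions $c\,I_a(z^N)$. One transcription slip: the denominator integral is $\tfrac{\a+1}{2}B\(\tfrac{Np}{2}+1,\a+1\)$ rather than $\tfrac12 B\(\tfrac{Np}{2}+1,\a+1\)$ (the factor $\a+1$ comes from the weight $\om_\a$), but your subsequent use of $(\a+1)\G(\a+1)=\G(\a+2)$ shows you accounted for it, since the final constant is exactly right.
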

\begin{proof}
In the case $a=0$, the desired inequality reduces to
$$
  \left\|\frac{f(z)}{z^N}\right\|_{p,\,\a} \le \( \frac{\G \(\frac{Np}{2}+2+\a\)}{\G(2+\a) \G \(\frac{Np}{2}+1\)} \)^{1/p} \|f\|_{p,\,\a}\,,
$$
with equality only for the constant multiples of $f(z)=z^N$. This follows directly from Proposition~\ref{prop-rad-norm-est} by a simple computation involving polar coordinates:
$$
 \int_0^1 2r \om_\a(r) \,d r = 1\,, \qquad \int_0^1 2 r^{Np+1} \om_\a (r)\,dr = (\a+1) B\(\frac{Np}{2}+1,\a+1\)\,,
$$
together with the well-known identities for the Beta and Gamma functions:
$$
 B(p,q) = \frac{\G(p) \G(q)}{\G(p+q)}\,, \qquad \G(p+1)=p \G(p)\,.
$$
\par
In the general case, we employ the isometries defined by \eqref{eq-Ap-isom}. Since $f\in A^p_\a$ and has a zero of multiplicity at least $N$ at $a$, the function $I_a f$ is also a function in $A^p_\a$ with a zero of multiplicity at least $N$ at the origin. By inspection, we see that
$$
 I_a\(\frac{f}{\vf_a^N}\)(z) = \frac{I_a f(z)}{z^N}\,,
$$
hence $f/\vf_a^N$ is also an $A^p_\a$ function of the same norm. Using this and the inequality already proved for $a=0$, we obtain:
\begin{eqnarray*}
 \left\|\frac{f}{\vf_a^N}\right\|_{p,\,\a}  &=& \left\|I_a\(\frac{f}{\vf_a^N}\)\right\|_{p,\,\a}
 = \left\| \frac{I_a f(z)}{z^N} \right\|_{p,\,\a}
 \le \( \frac{\G \(\frac{Np}{2}+2+\a\)}{\G(2+\a) \G \(\frac{Np}{2}+1\)} \)^{1/p} \|I_a f\|_{p,\,\a}
\\
 &=& \( \frac{\G \(\frac{Np}{2}+2+\a\)}{\G(2+\a) \G \(\frac{Np}{2}+1\)} \)^{1/p} \|f\|_{p,\,\a}\,.
\end{eqnarray*}
As we already know, equality will hold only for the constant multiples of the function $f$ determined by the condition $I_a f(z) = z^N$, that is, for
\begin{equation}
 f(z) = I_a (z^N) = \vf_a (z)^N \( \vf_a^\prime (z) \)^{\frac{2+\a}{p}} = \frac{(a-z)^N (1-|a|^2)^\frac{2+\a}{p}} {(1-\overline{a}z)^{N+\frac{2(2+\a)}{p}}}
 \label{eq-extr-fcn}
\end{equation}
and its constant multiples.
\end{proof}
\par
Of course, Theorem~\ref{thm-aut-div} can be stated equivalently as follows.
\par
\textit{If $g\in A^p_\a$, $N\ge 1$, and $a\in\D$, then\/}
$$
 \left\| g \right\|_{p,\,\a} \le \( \frac{\G \(\frac{Np}{2}+2+\a\)}{\G(2+\a) \G \(\frac{Np}{2}+1\)} \)^{1/p} \|\vf_a^N g\|_{p,\,\a}
$$
\textit{and the inequality is sharp.}
\par
It should be noted that, even in the case $N=1$, the problem of norm computation when dividing out a Blaschke product of degree two (say, $\vf_a\vf_b$) is considerably more complicated. It is not difficult to see that a  repeated application of Theorem~\ref{thm-aut-div}, dividing out first one single factor and then the other, does not give a sharp bound. The norm computation in this case constitutes an interesting open problem in our opinion.

\subsection{A sharp estimate for the derivative}
 \label{subsec-est-deriv}
\par
Osipenko and Stessin \cite[Theorem~5]{OS} found different sharp estimates for linear combinations $|\la_0 f(a) + \la_1 f^\prime (a)|$ for arbitrary $f\in A^p$, depending on the values of the coefficients $\la_0$ and $\la_1$. The estimates change significantly after reducing the space to the subspace of functions vanishing at $a$, as was shown in \cite[Theorem~11]{V2} for $p\ge 1$. The result below extends this to the case $0<p<1$, as a consequence of Theorem~\ref{thm-aut-div}. It also allows us to give a somewhat simpler proof in the known case $1\le p<\infty$.
\par
\begin{cor} \label{cor-ptwse-est}
Let $0<p<\infty$ and $\a>-1$. If $f\in A^p_\a$, $a\in\D$, and $f(a)=f^\prime(a)=\ldots=f^{(N-1)}(a)=0$ for a positive integer $N$, then
$$
 \left|f^{(N)}(a)\right| \le N! \( \frac{\G \(\frac{Np}{2}+2+\a\)}{\G(2+\a) \G \(\frac{Np}{2}+1\)} \)^{1/p} \frac{ \|f\|_{p,\,\a}}{(1-|a|^2)^{N+(2+\a)/p}}\,.
$$
The inequality is sharp.
\end{cor}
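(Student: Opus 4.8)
The plan is to combine the norm estimate of Theorem~\ref{thm-aut-div} with the pointwise growth bound \eqref{eq-Ap-est}, transferring the vanishing hypothesis on $f$ into an evaluation of the quotient $g=f/\vf_a^N$ at the single point $a$. Since $f\in A^p_\a$ vanishes to order at least $N$ at $a$, Theorem~\ref{thm-aut-div} already guarantees that $g$ belongs to $A^p_\a$ and controls its norm; the task is then to read off $f^{(N)}(a)$ from the value $g(a)$ and to apply the growth bound to $g$.

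First I would express $g(a)$ in terms of $f^{(N)}(a)$. Writing the Taylor expansion of $f$ about $a$, the vanishing conditions give $f(z)=\frac{f^{(N)}(a)}{N!}(z-a)^N+o((z-a)^N)$, while $\vf_a(z)^N$ has a zero of order exactly $N$ at $a$ with leading coefficient $(\vf_a'(a))^N$. Using $\vf_a'(a)=-1/(1-|a|^2)$ and passing to the limit $z\to a$ yields $|g(a)|=(1-|a|^2)^N\,|f^{(N)}(a)|/N!$. This is the only genuinely computational step, and it is where both the factor $(1-|a|^2)^N$ and the $N!$ enter.

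Next I would apply \eqref{eq-Ap-est} to $g$ at $z=a$, obtaining $|g(a)|\le(1-|a|^2)^{-(2+\a)/p}\|g\|_{p,\,\a}$, and then bound $\|g\|_{p,\,\a}$ by Theorem~\ref{thm-aut-div}. Substituting the expression for $|g(a)|$ from the previous step and solving for $|f^{(N)}(a)|$ produces exactly the claimed inequality, the exponent $N+(2+\a)/p$ being the sum of the contribution $N$ coming from the evaluation of $g(a)$ and of $(2+\a)/p$ coming from the growth bound.

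The point requiring the most care is sharpness, since equality must hold simultaneously in both \eqref{eq-Ap-est} and Theorem~\ref{thm-aut-div}. I expect the common extremiser to be the function \eqref{eq-extr-fcn}, namely $f(z)=\vf_a(z)^N(\vf_a'(z))^{(2+\a)/p}$: for this $f$ the quotient $g=f/\vf_a^N$ equals $(\vf_a'(z))^{(2+\a)/p}$, which, since $\vf_a'(z)=-(1-|a|^2)/(1-\overline{a}z)^2$, is precisely a constant multiple of the extremiser $f_a$ of \eqref{eq-Ap-est} at $z=a$, while $f$ itself is the extremiser of Theorem~\ref{thm-aut-div}. Checking that this single function makes both inequalities tight is what establishes that the constant cannot be improved.
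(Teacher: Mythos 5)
Your proposal is correct and takes essentially the same route as the paper: both combine Theorem~\ref{thm-aut-div} with the growth estimate \eqref{eq-Ap-est} applied to the quotient $f/\vf_a^N$, read off $f^{(N)}(a)/N!$ from its value at $a$, and verify sharpness with the function \eqref{eq-extr-fcn}. The only cosmetic difference is that you evaluate the growth bound directly at $z=a$ after computing $|g(a)|$ via $\vf_a'(a)=-1/(1-|a|^2)$, whereas the paper bounds $|g(z)(1-\overline{a}z)^N|$ for $z\neq a$ and lets $z\to a$.
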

\begin{proof}
Writing first $f(z)=(z-a)^N g(z)$, with $g$ analytic in $\D$, it is elementary that
$$
 \frac{f^{(N)}(a)}{N!}=g(a)\,.
$$
Next, in view of the sharp estimate \eqref{eq-Ap-est} and Theorem~\ref{thm-aut-div}, for every $z\neq a$ we have
$$
 \left|g(z) (1-\overline{a}z)^N\right| = \left|\frac{f(z)}{\vf_a(z)^N}\right| \le \frac{\left\| \frac{f}{\vf_a} \right\|_{p,\,\a}}{(1-|z|^2)^{(2+\a)/p}} \le
 \( \frac{\G \(\frac{Np}{2}+2+\a\)}{\G(2+\a) \G \(\frac{Np}{2}+1\)} \)^{1/p} \frac{\|f\|_{p,\,\a}}{(1-|z|^2)^{(2+\a)/p}}\,.
$$
Letting $z\to a$, we obtain the desired estimate.
\par
A direct computation shows that equality holds for the function given by \eqref{eq-extr-fcn}.
\end{proof}


\end{document}